\def\p{\partial}
\def\R{\mathbb{R}}
\def\vv<#1>{\langle#1\rangle}
\def\XXint#1#2{\setbox0=\hbox{$#1{#2}{\int}$}{#2}\kern-.5\wd0 }
\def\XXint#1#2#3{{\setbox0=\hbox{$#1{#2#3}{\int}$}
     \vcenter{\hbox{$#2#3$}}\kern-.5\wd0}}
\def\vv<#1>{{\left\langle#1\right\rangle}}
\def\Vol{\mbox{Vol}}
\newtheorem{thm}{Theorem}[section]
\newtheorem{lem}{Lemma}[section]
\newtheorem{cor}{Corollary}[section]
\theoremstyle{definition}
\theoremstyle{remark}
\newtheorem{rem}{Remark}[section]
\numberwithin{equation}{section}
\begin{document}
\title{Trace and inverse trace of Steklov eigenvalues}

\author{Yongjie Shi}
\address{Department of Mathematics, Shantou University, Shantou, Guangdong, 515063, China}
\email{yjshi@stu.edu.cn}
\author{Chengjie Yu$^1$}
\address{Department of Mathematics, Shantou University, Shantou, Guangdong, 515063, China}
\email{cjyu@stu.edu.cn}
\thanks{$^1$Research partially supported by a supporting project from the Department of Education of Guangdong Province with contract no. Yq2013073, the Yangfan project from Guangdong Province and NSFC 11571215.}
\renewcommand{\subjclassname}{%
  \textup{2010} Mathematics Subject Classification}
\subjclass[2010]{Primary 35P15; Secondary 58J32}
\date{}
\keywords{Differential form, Steklov eigenvalue,Hodge-Laplace operator}
\begin{abstract}
In this paper, we obtain some new estimates for the trace and inverse trace of  Steklov eigenvalues. The estimates generalize some previous results of Hersch-Payne-Schiffer \cite{HPS}, Brock \cite{Br}, Raulot-Savo \cite{RS1} and Dittmar \cite{Di}.
\end{abstract}
\maketitle\markboth{Shi \& Yu}{Trace and inverse trace}
\section{Introduction}
Let $(M^n,g)$ be a compact oriented Riemannian manifold with nonempty boundary. The Dirichlet-to-Neumann map or Steklov operator for differential forms sends a differential $p$-form $\omega\in A^p(\p M)$  to $i_\nu d\hat\omega$. Here $\nu$ is the outward unit normal vector and $\hat\omega$ is the tangential harmonic extension of $\omega$. That is,
\begin{equation}
\left\{\begin{array}{l}\Delta \hat\omega=0\\
\iota^*\hat\omega=\omega\\
i_\nu\hat\omega=0
\end{array}\right.
\end{equation}
where $\Delta=d\delta+\delta d$ is the Hodge-Laplace operator and $\iota:\p M\to M$ is the natural inclusion. This definition of Dirichlet-to-Neumann map for differential forms was introduced by Raulot and Savo in \cite{RS2} in recent years. When $p=0$, it is clear that the definition is the same as the classical one that essentially introduced by Steklov \cite{St}. Moreover, it was shown in \cite{RS2} that, the same as the classical one (See \cite{Ta}), the Dirichlet-to-Neumann map is a nonnegative self-adjoint first order elliptic pseudo differential operator. So, the eigenvalues of the Dirichlet-to-Neumann map for differential $p$-forms are nonnegative and discrete,  and we can list them in ascending order (counting multiplicity) as
\begin{equation}
0\leq \sigma_1^{(p)}\leq \sigma_2^{(p)}\leq \cdots\leq \sigma_k^{(p)}\leq\cdots.
\end{equation}
They are called Steklov eigenvalues of $(M,g)$ for differential $p$-forms.

There are two other definitions of Dirichlet-to-Neumann map introduced by Joshi-Lionheart \cite{JL} and Belishev-Sharafutdinov \cite{BS} considering geometric inverse problems. However, the definition of Raulot-Savo \cite{RS2} given above is more suitable for spectral analysis because it is self-adjoint and elliptic. The Dirichlet-to-Neumann map for functions has been extensively studied because it is deeply related with physics (See \cite{Ku}) and Calder\'on's inverse problem in applied mathematics (See \cite{Ca,Ul}).

There have been many works on the estimate of Steklov eigenvalues.  For example,
in \cite{W}, Weinstock obtained the following estimate for any simply connected planar domain $\Omega$:
\begin{equation}\label{eqn-W}
\sigma_2^{(0)}L(\p\Omega)\leq 2\pi
\end{equation}
 where equality holds if and only if $\Omega$ is a disk. Here $L(\p\Omega)$ means the length of the $\p \Omega$. This estimate was later generalized by Hersch-Payne-Schiffer \cite{HPS} in a much more general form by using the conjugate harmonic functions. More precisely, Hersch-Payne-Schiffer \cite{HPS} obtained the following inequalities for any simply connected planar domain $\Omega$:
\begin{equation}\label{eqn-HPS}
\sigma_{p+1}^{(0)}\sigma_{q+1}^{(0)}L(\p\Omega)^2\leq \left\{\begin{array}{ll}(p+q)^2\pi^2& p+q\ \mbox{is even}\\
(p+q-1)^2\pi^2& p+q\ \mbox{is odd.}\end{array}\right.
\end{equation}
Letting $p=q$ in \eqref{eqn-HPS}, one obtain
\begin{equation}\label{eqn-g-W}
\sigma_{p+1}^{(0)}L(\p\Omega)\leq 2p\pi.
\end{equation}
This is a generalization of \eqref{eqn-W}. \eqref{eqn-HPS} and \eqref{eqn-g-W} were later generalized by \cite{FS1,GP1,GP2,GP3} for general surfaces. \eqref{eqn-HPS} was recently generalized by \cite{YY2} for general manifolds using the theory of Raulot-Savo \cite{RS1} on Steklov eigenvalues of differential forms and harmonic conjugate forms. There are many other interesting estimates of Steklov eigenvalues, see for example  \cite{Br,CEG,E,FS2,IM,K,RS1,RS2,RS3,WX}. \cite{GP} is an excellent survey for recent progresses on the topic.

In this paper, motivated by Brock \cite{Br} and Hersch-Payne-Schiffer \cite{HPS}, by combining the tricks in \cite{YY,YY2} and \cite{HPS}, we consider lower bound estimate for inverse trace and upper bound estimate for trace of Steklov eigenvalues.

We first consider inverse trace of Steklov eigenvalues for functions on Riemannian surfaces. In this case, we obtain the following result.
\begin{thm}\label{thm-g-steklov}
Let $(M^2,g)$ be a compact oriented Riemannian surface with $\p M\neq \emptyset$. Then
\begin{equation}
\begin{split}
\sum_{i=1}^{2n}f\left(\frac{1}{\sigma_{m+i}^{(0)}}\right)\geq2\sum_{i=1}^{n}f\left(\frac{1}{\lambda_{b_1+2m+2i-2}^{1/2}}\right)
\end{split}
\end{equation}
for any increasing convex function $f$, and any positive integer $m$, where $b_1$ is the first Betti number of $M$ and
\begin{equation}
0=\lambda_1\leq \lambda_2\leq \cdots\leq \lambda_k\leq \cdots
\end{equation}
are the spectrum of $\p M$ for Laplacian operator.
\end{thm}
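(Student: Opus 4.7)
The plan is to combine a Ky Fan type majorization inequality for eigenvalues of a compact self-adjoint operator with a test-subspace construction based on Hodge-conjugate harmonic extensions of boundary Laplace eigenfunctions on the oriented surface $M$.

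First I would set up the variational principle. Since $\sigma_1^{(0)}=0$ and the remaining Steklov eigenvalues are positive, the inverse Dirichlet-to-Neumann operator $\Lambda^{-1}$, restricted to the $L^2(\partial M)$-orthogonal complement of the constants, is compact and self-adjoint with eigenvalues $1/\sigma_2^{(0)}\geq 1/\sigma_3^{(0)}\geq\cdots$. Applied to the increasing convex $f$, Ky Fan's majorization inequality implies that for any $2n$-dimensional subspace $W\subset L^2(\partial M)$ that is orthogonal to the span of the first $m$ Steklov eigenfunctions,
\begin{equation*}
\sum_{i=1}^{2n} f\!\left(\frac{1}{\sigma_{m+i}^{(0)}}\right)\;\geq\;\sum_{i=1}^{2n} f(\mu_i),
\end{equation*}
where $\mu_1\geq\cdots\geq\mu_{2n}$ are the eigenvalues of the restriction of the quadratic form $u\mapsto \langle u,\Lambda^{-1}u\rangle_{L^2(\partial M)}$ to $W$. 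It therefore suffices to construct $W$ so that the $\mu_i$ pair up and satisfy $\mu_{2i-1},\mu_{2i}\geq 1/\sqrt{\lambda_{b_1+2m+2i-2}}$.

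Second I would build $W$ by harmonic conjugation on the surface. Let $\phi_1,\phi_2,\ldots$ be an $L^2(\partial M)$-orthonormal basis of eigenfunctions of the boundary Laplacian, and let $\tilde\phi_k$ denote the harmonic extension of $\phi_k$ to $M$. Because $M$ is two-dimensional and oriented, $*d\tilde\phi_k$ is a closed $1$-form; modulo the $b_1$-dimensional space of harmonic $1$-forms on $M$ it is exact, yielding a conjugate harmonic function $\psi_k$ on $M$. At the boundary, the Cauchy-Riemann type relation linking the tangential derivative of $\psi_k|_{\partial M}$ to $\partial_\nu\tilde\phi_k$, together with the pointwise identity $|d\tilde\phi_k|=|d\psi_k|$, allows one to convert the Steklov Rayleigh quotient $\int_M |du|^2/\int_{\partial M} u^2$ on the span of the pair $\{\tilde\phi_k,\psi_k\}$ into a quantity dominated by $\sqrt{\lambda_k}$. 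This is the source of both the square root and the doubling on the right-hand side: each usable $\phi_k$ produces two independent trial functions in $W$.

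Third I would track the index shift. Quotienting out the $b_1$-dimensional space of harmonic $1$-forms obstructing single-valued conjugation costs $+b_1$ in the Laplace index. Requiring $W$ to be $L^2(\partial M)$-orthogonal to the first $m$ Steklov eigenfunctions then costs another $+2m$, since each such Steklov constraint removes one conjugate pair from the sequence $(\tilde\phi_k,\psi_k)$. Together with the $+2(i-1)$ from choosing the $i$-th consecutive pair, these contributions combine to give the index $b_1+2m+2i-2$ appearing on the right-hand side.

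I expect the main obstacle to be calibrating the orthogonality conditions inside the $\tilde\phi$-$\psi$ construction. Because $\psi_k$ is only defined modulo the $b_1$-dimensional cohomology of harmonic $1$-forms, representatives must be chosen so that all $2n$ test functions simultaneously lie in the Steklov orthogonality subspace and retain the clean Cauchy-Riemann relation at $\partial M$. Once this bookkeeping is arranged, the pointwise identity $|d\tilde\phi_k|=|d\psi_k|$ combined with Cauchy-Schwarz along $\partial M$ delivers the pair-wise bound on $\Lambda^{-1}|_W$, and the Ky Fan inequality finishes the proof.
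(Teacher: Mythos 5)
Your proposal follows essentially the same route as the paper: harmonic extensions of boundary Laplace eigenfunctions paired with their Hodge conjugates (available after imposing $b_1$ linear conditions), the Hersch--Payne--Schiffer mechanism $\int_M|du|^2=\int_M|dv|^2=\int_{\partial M}v\,i_\nu(*du)\,dV_{\partial M}$ which bounds the \emph{product} of the two boundary Rayleigh quotients by $\lambda_{b_1+2m+2i-2}$, and a Schur-majorization-plus-convexity argument passing from eigenvalues to paired diagonal entries of the Gram matrix. The one imprecision is your claim that $\mu_{2i-1},\mu_{2i}\geq\lambda_{b_1+2m+2i-2}^{-1/2}$ individually: the construction only controls the product of the two paired diagonal entries, so the conclusion requires the additional elementary step $f(a)+f(b)\geq 2f\bigl(\sqrt{ab}\bigr)$ for increasing convex $f$ --- which is exactly how the paper finishes.
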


In Theorem \ref{thm-g-steklov}, when $b_1=0$, if we choose $m=1$ and $f(t)=t$, by noting that  $\lambda_{2i}(\p M)=\left(\frac{2i\pi}{L(\p M)}\right)^2$ with $L(\p M)$ the length of $\p M$ ($\p M$ has only one connected component), we have
\begin{equation}
\frac{1}{\sigma_2^{(0)}}+\frac{1}{\sigma_3^{(0)}}+\cdots+\frac{1}{\sigma_{2n+1}^{(0)}}\geq \frac{L(\p M)}{\pi}\left(1+\frac{1}{2}+\frac{1}{3}+\cdots+\frac{1}{n}\right).
\end{equation}
This is the inverse trace estimate in \cite{HPS}.

Moreover, when $b_1=0$, if we choose $m=1$ and $f(t)=t^2$ in Theorem \ref{thm-g-steklov}, we have
\begin{equation}
\frac{1}{{\sigma_2^{(0)}}^2}+\frac{1}{{\sigma_3^{(0)}}^2}+\cdots+\frac{1}{{\sigma_{2n+1}^{(0)}}^2}\geq \frac{L(\p M)^2}{2\pi^2}\left(1+\frac{1}{2^2}+\frac{1}{3^2}+\cdots+\frac{1}{n^2}\right).
\end{equation}
By letting $n\to\infty$ in the last inequality, we have
\begin{equation}
\sum_{i=2}^\infty\frac{1}{{\sigma_{i}^{(0)}}^2}\geq \frac{L(\p M)^2}{12}.
\end{equation}
This is an estimate stronger than Theorem 3.3 in \cite{Di}.

Next, we consider the inverse trace of Steklov eigenvalues for general manifolds. For this case, we obtain the following result.

\begin{thm}\label{thm-g-steklov-man}
Let $(M^n,g)$ be a compact oriented Riemannian manifold with nonempty boundary. Then
\begin{enumerate}
\item \begin{equation}
\sum_{i=1}^{m}f\left(\frac{1}{\sigma_{r+i}^{(0)}}\right)+\sum_{i=1}^mf\left(\frac{1}{\sigma_{b_{n-2}+s+i-1}^{(n-2)}}\right)\geq 2\sum_{i=1}^mf\left(\frac{1}{\lambda_{b_{n-1}+r+s+i-1}^{1/2}}\right) \end{equation}
\item \begin{equation}
\sum_{i=1}^{m}f\left(\frac{1}{\sigma_{r+i}^{(0)}\sigma_{b_{n-2}+s+i-1}^{(n-2)}}\right)\geq\sum_{i=1}^mf\left(\frac{1}{\lambda_{b_{n-1}+r+s+i-1}}\right)
\end{equation}
\end{enumerate}
for any increasing convex function $f$ and any positive integers $r,s$, where $b_p$ is the $p$-th Betti number of $M$ and
\begin{equation}
0=\lambda_1\leq \lambda_2\leq \cdots\leq \lambda_k\leq \cdots
\end{equation}
are the spectrum of $\p M$ for Laplacian operator.
\end{thm}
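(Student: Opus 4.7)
The plan is to reduce both parts to a single pointwise estimate
\begin{equation*}
\sigma_{r+i}^{(0)}\,\sigma_{b_{n-2}+s+i-1}^{(n-2)}\leq \lambda_{b_{n-1}+r+s+i-1},\qquad 1\leq i\leq m,
\end{equation*}
and then derive (1) and (2) by elementary inequalities. Part (2) is immediate from the monotonicity of $f$. For part (1), applying AM--GM ($\tfrac{1}{a}+\tfrac{1}{b}\geq\tfrac{2}{\sqrt{ab}}$), Jensen's inequality for convex $f$ ($f(x)+f(y)\geq 2f(\tfrac{x+y}{2})$), and the monotonicity of $f$ yields, for each $i$,
\begin{equation*}
f\!\left(\tfrac{1}{\sigma_{r+i}^{(0)}}\right)+f\!\left(\tfrac{1}{\sigma_{b_{n-2}+s+i-1}^{(n-2)}}\right)\geq 2f\!\left(\tfrac{1}{\sqrt{\sigma_{r+i}^{(0)}\,\sigma_{b_{n-2}+s+i-1}^{(n-2)}}}\right)\geq 2f\!\left(\tfrac{1}{\lambda_{b_{n-1}+r+s+i-1}^{1/2}}\right),
\end{equation*}
and summing over $i$ gives (1).

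To establish the pointwise estimate, I follow the Hersch--Payne--Schiffer strategy adapted to general manifolds as in \cite{YY2}, combined with the harmonic conjugate $(n-2)$-form theory of \cite{RS1}. The key analytic ingredient is: for a function $\phi\in L^2(\partial M)$ in the $L^2$-orthogonal complement of a canonical $b_{n-1}$-dimensional subspace $\mathcal H\subset L^2(\partial M)$ (the cohomological obstruction to global existence of a conjugate form), the tangentially harmonic extension $\hat\phi$ admits a coclosed $(n-2)$-form $\eta_\phi$ on $M$ with $d\eta_\phi=\ast d\hat\phi$. Its tangential trace $\omega_\phi=\iota^*\eta_\phi$ is determined modulo the $b_{n-2}$-dimensional kernel of the Steklov $(n-2)$-operator, and from the boundary relation $S^{(n-2)}\omega_\phi=\ast_{\partial M}d\phi$, Green's formula, and Cauchy--Schwarz, one obtains the product Rayleigh bound
\begin{equation*}
R^{(0)}(\phi)\cdot R^{(n-2)}(\omega_\phi)\leq \lambda_N
\end{equation*}
for every $\phi$ in the span of the first $N$ Laplace eigenfunctions of $\partial M$ modulo $\mathcal H$, where $R^{(p)}$ denotes the Rayleigh quotient computing $\sigma_k^{(p)}$ via min--max.

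Applying this bound with $N=b_{n-1}+r+s+i-1$, the test space $V=\mathcal E_N\cap\mathcal H^\perp$ has dimension $r+s+i-1$. A Courant--Fischer type intersection argument in the spirit of \cite{YY2} then selects a $\phi\in V$ simultaneously orthogonal to the first $r+i-1$ Steklov $0$-form eigenfunctions and whose conjugate $\omega_\phi$ is orthogonal to the first $b_{n-2}+s+i-2$ Steklov $(n-2)$-form eigenforms (the $b_{n-2}$-dimensional harmonic kernel being absorbed by the ambiguity in $\omega_\phi$). The min--max characterizations of the Steklov eigenvalues combined with the product Rayleigh bound then yield the pointwise estimate.

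\textbf{Main obstacle.} The heart of the argument is the Courant--Fischer style dimension-count: the Betti corrections $b_{n-1}$ (from the conjugate-form obstruction) and $b_{n-2}$ (from the harmonic kernel of the $(n-2)$-form Steklov operator) must align precisely with the indices in the theorem. While the construction of $\eta_\phi$ with the correct cohomological normalization is standard from \cite{RS1}, propagating the pointwise estimate uniformly across all $i$ and tracking the index shifts requires a careful max--min intersection argument that extends the single-eigenvalue HPS count of \cite{YY2} to the multi-eigenvalue setting.
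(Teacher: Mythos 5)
Your reduction of both parts to the termwise estimate $\sigma_{r+i}^{(0)}\sigma_{b_{n-2}+s+i-1}^{(n-2)}\leq \lambda_{b_{n-1}+r+s+i-1}$ for every $1\leq i\leq m$ is the fatal step: that estimate is false in general, and the theorem is genuinely a sum (majorization) inequality, not a termwise one. To see the failure concretely, take $n=2$, $M=\Omega$ a simply connected planar domain, $r=s=1$, $i=2$; your pointwise bound becomes $\sigma_3^{(0)}\,L(\p\Omega)\leq 2\pi$, whereas the sharp bound is $\sigma_3^{(0)}L(\p\Omega)\leq 4\pi$, attained in the limit by two disks joined by a thin neck. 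Your own dimension count already signals this: for the $i$-th term you must impose roughly $r+s+2i-3$ orthogonality conditions inside a test space of dimension only $r+s+i-1$, so for $i\geq 2$ the intersection argument produces nothing; the honest Courant--Fischer count only yields the weaker $\lambda_{b_{n-1}+r+s+2i-2}$ on the right, which does not prove the theorem. No refinement of the max--min argument can close this, because the statement you are trying to prove termwise is simply not true.

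The paper avoids this entirely by working with the whole $m$-dimensional families at once. It constructs $u_1,\dots,u_m$ with $u_i\in\mbox{span}\{\hat\phi_2,\dots,\hat\phi_{b_{n-1}+r+s+i-1}\}$, $du_i$ mutually $L^2(M)$-orthonormal, $u_i\perp\psi_1,\dots,\psi_{r-1}$, and conjugate forms $\omega_i\perp\epsilon_1,\dots,\epsilon_{s-1}$, then forms the quadratic-form operators $A$ on $\mbox{span}\{u_i\}$ and $B$ on $\mbox{span}\{\omega_i\}$. Courant--Fischer gives $\sigma_{r+i}^{(0)}\leq\lambda_i(A)$ and $\sigma_{b_{n-2}+s+i-1}^{(n-2)}\leq\lambda_i(B)$, and Cauchy--Schwarz gives the \emph{diagonal-entry} bound $A^{-1}(i,i)B^{-1}(i,i)\geq 1/\lambda_{b_{n-1}+r+s+i-1}$. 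The passage from diagonal entries of $A^{-1}$, $B^{-1}$ to their eigenvalues is exactly where Lemma \ref{lem-matrix} (Schur's majorization theorem combined with the Hadamard-product eigenvalue majorization) enters, and it only yields inequalities for sums of an increasing convex $f$ --- which is precisely the form of the theorem. If you want to salvage your write-up, you must replace the pointwise estimate by this matrix/majorization step; the AM--GM and Jensen manipulations you use to pass between parts (1) and (2) are fine once applied to the diagonal entries rather than to individual eigenvalue products.
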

When $n=2$, the theorem also gives some estimates for Riemannian surfaces. For example, when $r=s$, we have
\begin{equation}
\sum_{i=1}^{m}f\left(\frac{1}{\sigma_{r+i}^{(0)}}\right)\geq \sum_{i=1}^mf\left(\frac{1}{\lambda_{b_{1}+2r+i-1}^{1/2}}\right).
\end{equation}
This is weaker than Theorem \ref{thm-g-steklov}. However, it is also sharp when $M$ is a disk.  Moreover, when $m=1$ and $f(t)=t$ in Theorem \ref{thm-g-steklov-man}, we obtain
\begin{equation}
\sigma_{1+r}^{(0)}\sigma_{b_{n-2}+s}^{(n-2)}\leq \lambda_{b_{n-1}+r+s}.
\end{equation}
This is the higher dimensional generalization of \eqref{eqn-HPS} in \cite{YY2}.

Finally, we obtain an estimate for trace of Steklov eigenvalues.
\begin{thm}\label{thm-trace-g}
Let $(M^n,g)$ be a compact oriented Riemannian manifold with nonempty boundary. Let $V$ be the space of parallel exact 1-forms on $M$. Suppose that $\dim V=m>0$. Then
\begin{equation}
\sum_{i=1}^{C_{m}^{p+1}}\sigma_{b_{p}+i}^{(p)}\leq \frac{C_{m-1}^p\Vol(\p M)}{\Vol (M)}
\end{equation}
for $p=1,2,\cdots,m-1$.
\end{thm}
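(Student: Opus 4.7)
The plan is to carry out the variational (Ky Fan trace) strategy with test forms built from the parallel exact $1$-forms of $V$. Fix an orthonormal basis $\omega_1=df_1,\ldots,\omega_m=df_m$ of $V$; parallelism forces each $|\omega_j|$ to be constant and pointwise orthonormality is preserved globally, so we may normalize to $|\omega_j|\equiv 1$. For each $(p+1)$-subset $I=\{i_0<\cdots<i_p\}$ of $\{1,\ldots,m\}$, introduce the $p$-form on $M$
$$\phi_I \;=\; \frac{1}{p+1}\sum_{k=0}^{p}(-1)^k f_{i_k}\,\omega_{i_0}\wedge\cdots\widehat{\omega_{i_k}}\cdots\wedge\omega_{i_p},$$
where the additive constants in the primitives $f_j$ are fixed so that each $\iota^*\phi_I$ is $L^2(\p M)$-orthogonal to the kernel of the Steklov operator $T^{(p)}$. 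A direct calculation using $\nabla\omega_j=0$ shows $d\phi_I=\Omega_I:=\omega_{i_0}\wedge\cdots\wedge\omega_{i_p}$, a unit-norm parallel $(p+1)$-form, and $\delta\phi_I=0$; in particular $\phi_I$ is harmonic on $M$ and $\int_M(|d\phi_I|^2+|\delta\phi_I|^2)=\Vol(M)$.

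The second ingredient is a biorthogonality produced by Stokes' theorem. Setting $\gamma_J:=i_\nu\Omega_J|_{\p M}$ (a tangential $p$-form on $\p M$),
$$\int_{\p M}\langle\iota^*\phi_I,\gamma_J\rangle \;=\; \int_M\langle d\phi_I,\Omega_J\rangle \;=\; \langle\Omega_I,\Omega_J\rangle_{L^2(M)} \;=\; \delta_{IJ}\Vol(M).$$
This forces the family $\{\iota^*\phi_I\}_I$ to be linearly independent and hence span a subspace $E\subset L^2(\p M;\Lambda^p)$ of dimension $C_m^{p+1}$. By the Ky Fan trace inequality,
$$\sum_{i=1}^{C_m^{p+1}}\sigma_{b_p+i}^{(p)} \;\leq\; \mathrm{tr}(T^{(p)}|_E) \;=\; \mathrm{tr}(G^{-1}H),$$
where $G_{IJ}=\langle\iota^*\phi_I,\iota^*\phi_J\rangle_{L^2(\p M)}$ and $H_{IJ}=\langle T^{(p)}\iota^*\phi_I,\iota^*\phi_J\rangle_{L^2(\p M)}$.

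The heart of the proof is then to establish
$$\mathrm{tr}(G^{-1}H)\;\leq\;\frac{1}{\Vol(M)}\sum_I\|\gamma_I\|^2_{L^2(\p M)},$$
from which the theorem follows via the pointwise identity
$$\sum_I|\gamma_I|^2 \;=\; C_{m-1}^p\sum_{j=1}^m(\omega_j(\nu))^2 \;\leq\; C_{m-1}^p$$
(each index $j$ lies in exactly $C_{m-1}^p$ of the $(p+1)$-subsets, and $\sum_j(\omega_j(\nu))^2\leq|\nu|^2=1$), integrated over $\p M$. To prove the trace inequality one decomposes $d\,\widehat{\iota^*\phi_I}=\Omega_I+\eta_I$ with $\eta_I$ orthogonal in $L^2(M)$ to the span of the $\Omega_K$ (valid because the same Stokes identity yields $\langle d\,\widehat{\iota^*\phi_I},\Omega_K\rangle_{L^2(M)}=\delta_{IK}\Vol(M)$), and then analyzes $\mathrm{tr}(G^{-1}H)$ through this decomposition together with the biorthogonality between $\iota^*\phi_I$ and $\gamma_J$.

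The main obstacle is the trace inequality above. Although $\phi_I$ is harmonic, it is typically not tangential on $\p M$, so one cannot directly feed it into the Dirichlet variational principle defining $T^{(p)}$ to obtain a clean energy bound on $\langle T^{(p)}\iota^*\phi_I,\iota^*\phi_I\rangle$. The duality between $\{\iota^*\phi_I\}$ and $\{\gamma_I\}$ is precisely the device that routes around the non-tangentiality; combining this duality with the Dirichlet minimization of the tangential harmonic extension and the $\Omega_I$-decomposition of $d\,\widehat{\iota^*\phi_I}$ is the central technical step.
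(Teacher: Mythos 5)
Your overall architecture matches the paper's: both reduce the sum of the first $C_m^{p+1}$ nonzero Steklov eigenvalues to the trace of an operator built from $\wedge^{p+1}V$, and your closing pointwise computation $\sum_I|i_\nu\Omega_I|^2=C_{m-1}^p\sum_j(\omega_j(\nu))^2\leq C_{m-1}^p$ is exactly the paper's bound \eqref{eqn-norm}. However, there is a genuine gap at the step you yourself flag as ``the heart of the proof'': the inequality $\sum_{i=1}^{C_m^{p+1}}\sigma_{b_p+i}^{(p)}\leq \mathrm{tr}(G^{-1}H)\leq \frac{1}{\Vol(M)}\sum_I\|i_\nu\Omega_I\|^2_{L^2(\p M)}$ is asserted, its difficulty is diagnosed (the $\phi_I$ are not tangential, so they are not admissible test forms for the Raulot--Savo variational principle), and a strategy is gestured at (biorthogonality of $\{\iota^*\phi_I\}$ with $\{\gamma_J\}$ plus a decomposition $d\,\widehat{\iota^*\phi_I}=\Omega_I+\eta_I$), but the inequality is never actually derived. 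This is precisely the content of the eigenvalue comparison $\sigma_{b_p+i}^{(p)}\leq\lambda_i(A^{(p+1)})$ that the paper imports wholesale as Lemma 2.1 of \cite{YY}; the paper's entire new contribution in this proof is the trace computation and the combinatorial bound, with the hard variational/duality lemma cited rather than reproved. Your proposal attempts to reconstruct that lemma from scratch and stops at its central estimate, so as written the proof is incomplete.

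A secondary, related gap: the index shift by $b_p$ (i.e., getting $\sigma_{b_p+i}^{(p)}$ rather than $\sigma_i^{(p)}$) requires the test space to be $L^2(\p M)$-orthogonal to the $b_p$-dimensional kernel of the Dirichlet-to-Neumann map. You propose to arrange this by ``fixing the additive constants in the primitives $f_j$,'' but varying the constants $c_j$ in $f_j\mapsto f_j+c_j$ only perturbs each $\phi_I$ within a fixed finite-dimensional family of parallel $p$-forms, and there is no reason this family projects onto the kernel (which consists of restrictions of forms in $\mathcal H_N^p(M)$ and is governed by the topology of $M$, not by $V$). Handling this orthogonalization without losing the $L^2(M)$-orthonormality of the $\Omega_I$ (which your Gram-matrix argument relies on) is again part of what the cited lemma accomplishes, and it needs an actual argument in your version.
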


This result implies the general estimate in \cite{YY} directly (See Corollary \ref{cor-YY-1} ). Applying the result to domains in Euclidean spaces, one can obtain the estimates in \cite{Br, YY} and \cite{RS1} (See Corollary \ref{cor-YY-2} and Remark \ref{rem-br}).

The proof of Theorem \ref{thm-g-steklov} and Theorem \ref{thm-g-steklov-man} is motivated by \cite{HPS} using conjugate harmonic forms. The proof of Theorem \ref{thm-trace-g} is motivated by \cite{Br}. The organization of the remaining parts of this paper is as follows. In Section 2, we recall some preliminaries in conjugate harmonic forms and matrix inequalities. In Section 3, we prove Theorem \ref{thm-g-steklov}. In Section 4, we prove Theorem \ref{thm-g-steklov-man}. Finally, in Section 5, we prove Theorem \ref{thm-trace-g}.
\section{Preliminaries}
In this section, we recall some preliminaries in conjugate harmonic forms and matrix inequalities that will be used in next sections.

First, we summarize the construction of conjugate harmonic forms of a harmonic function on a general manifold in \cite{YY2} as the following lemma. The proof which can be found in \cite{YY2} is a simple application of Hodge decomposition theorem for compact Riemannian manifolds with nonempty boundary (See \cite{S}).
\begin{lem}\label{lem-harm-conj}
Let $(M^n,g)$ be a compact oriented Riemannian manifold with nonempty boundary and $u$ be a harmonic function on $M$.  Suppose that
\begin{equation}
*du\perp_{L^2(M)}\mathcal H_N^{(n-1)}(M).
\end{equation}
Then, there is a unique $\omega\in A^{n-2}(M)$ such that
\begin{enumerate}
\item $d\omega=*du;$
\item $\delta\omega=0$;
\item $i_\nu\omega=0$ and
\item $\omega\perp_{L^2(\p M)}\mathcal H_N^{n-2}(M)$.
\end{enumerate}
Here
\begin{equation}
\mathcal H_N^{p}=\{\gamma\in A^p(M)\ |\ d\gamma=\delta\gamma=0\ \mbox{and}\ i_\nu\gamma=0\}.
\end{equation}
$\omega$ is called the conjugate harmonic form of $u$.
\end{lem}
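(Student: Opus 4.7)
The plan is to build $\omega$ directly from the Hodge--Morrey--Friedrichs orthogonal decomposition for compact oriented Riemannian manifolds with boundary (see \cite{S}), in the spirit of \cite{YY2}. The first step is to verify that $\alpha:=*du\in A^{n-1}(M)$ is closed: since $u$ is a function, $\Delta u=\delta du$, so harmonicity forces $\delta du=0$, and the identity $d\,*=\pm *\,\delta$ on an oriented $n$-manifold then yields $d\alpha=\pm *\delta du=0$. This is the one place in which harmonicity of $u$ enters.

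Next, I would decompose $\alpha$ using the orthogonal splitting
\begin{equation*}
A^{n-1}(M)=d A^{n-2}_N(M)\oplus \delta A^{n}(M)\oplus \mathcal H_N^{(n-1)}(M),
\end{equation*}
where $A^{n-2}_N(M)=\{\eta\in A^{n-2}(M): i_\nu\eta=0\}$. Writing $\alpha=d\omega_0+\delta\beta+h$, the closedness of $\alpha$ together with $d^2=0$ yields $d\delta\beta=0$, which upon pairing with $\beta$ forces $\delta\beta=0$; the hypothesis $*du\perp\mathcal H_N^{(n-1)}(M)$ kills $h$. The result is $\omega_0\in A^{n-2}_N(M)$ with $d\omega_0=\alpha$, yielding (1) and (3). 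To enforce (2) and (4), I would apply the analogous decomposition inside $A^{n-2}_N(M)$, writing $\omega_0=d\eta+\omega+h'$ with $\eta\in A^{n-3}_N(M)$, $\omega$ in the coexact Neumann summand, and $h'\in\mathcal H_N^{n-2}(M)$. Because $d\eta$ and $h'$ are both closed, $d\omega=d\omega_0=\alpha$ and $i_\nu\omega=0$ persist; $\omega\perp\mathcal H_N^{n-2}(M)$ is automatic from the decomposition; and since $\omega$ lies in the image of $\delta$, also $\delta\omega=0$.

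Uniqueness of $\omega$ follows because any two choices differ by an element of $\ker d\cap A^{n-2}_N(M)$ orthogonal to $dA^{n-3}_N(M)\oplus\mathcal H_N^{n-2}(M)$, and the decomposition forces such a difference to be zero. The main obstacle is administrative rather than mathematical: one must cite the version of the Hodge--Morrey--Friedrichs theorem whose boundary conditions align with the lemma's conclusions, so that the $\delta$-summand of the first decomposition has the correct Neumann behaviour (making $\delta\beta=0$ really follow from $d\delta\beta=0$) and so that the secondary decomposition of $A^{n-2}_N(M)$ genuinely produces a coexact summand. Once the correct formulation from \cite{S} is invoked, all four conditions drop out without further work.
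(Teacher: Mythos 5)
The paper does not actually supply a proof of this lemma: it defers to \cite{YY2}, noting only that the argument is ``a simple application of the Hodge decomposition theorem'' from \cite{S}. Your proposal is therefore the intended approach, and its skeleton (closedness of $*du$ from harmonicity, exactness from the orthogonality hypothesis to $\mathcal H_N^{(n-1)}(M)$, then a second decomposition to normalize the primitive, uniqueness modulo $\mathcal H_N^{n-2}(M)$) is the right one. But there is one substantive mismatch with the statement you are proving. Condition (4) is orthogonality in $L^2(\partial M)$ --- orthogonality of boundary traces --- not in $L^2(M)$. Your construction makes $\omega$ orthogonal to $\mathcal H_N^{n-2}(M)$ in the \emph{interior} $L^2$ inner product (``automatic from the decomposition''), and your uniqueness argument likewise runs through the interior pairing. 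These are different normalizations, and the boundary one is the one the paper actually uses later: in the proof of Theorem 1.2 the trace of $\omega_i$ must be $L^2(\partial M)$-orthogonal to the zero-eigenspace of the Dirichlet-to-Neumann map on $(n-2)$-forms, which is $\iota^*\mathcal H_N^{n-2}(M)$. The repair is short but not free: the solution set of (1)--(3) is the affine space $\omega_0+\mathcal H_N^{n-2}(M)$, and one projects with respect to the boundary pairing instead; for this to single out a unique element one must know that the bilinear form $(\gamma_1,\gamma_2)\mapsto\int_{\partial M}\vv<\gamma_1,\gamma_2>$ is positive definite on $\mathcal H_N^{n-2}(M)$, which requires a unique continuation argument (a harmonic field with $i_\nu\gamma=0$ and $\iota^*\gamma=0$ vanishes identically).

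On the ``administrative'' issues you flag: they are slightly more than administrative as written. The exact summand in the Hodge--Morrey decomposition carries a Dirichlet condition on the potential ($dA^{p-1}_D$), not the Neumann condition $dA^{p-1}_N$ you wrote, and $i_\nu\eta=0$ does \emph{not} imply $i_\nu d\eta=0$, so your second decomposition ``inside $A^{n-2}_N(M)$'' is not internal to that space. The clean route is: (i) from $\ker d=dA^{n-2}(M)\oplus\mathcal H_N^{n-1}(M)$ and the hypothesis, write $*du=d\beta$; (ii) apply Hodge--Morrey to $\beta$ itself, $\beta=d\gamma+\delta\rho+\kappa$ with $i_\nu\rho=0$, and set $\omega_0=\delta\rho$; then $d\omega_0=*du$, $\delta\omega_0=0$, and $i_\nu\omega_0=0$ because $i_\nu\rho=0$ implies $i_\nu\delta\rho=0$ (via $i_\nu\omega=0\Leftrightarrow\iota^*(*\omega)=0$ and $*\delta=\pm d\,*$). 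This avoids both your $\delta\beta=0$ step and the persistence issue.
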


Next, recall an inequality on the trace of eigenvalues for positive definite matrices that will be used in next sections. Because we can not find direct reference for the inequality, we also give the proof of the inequality.
\begin{lem}\label{lem-matrix}
Let $A$ and $B$ be two symmetric $n\times n$ matrices that are both positive definite. Let
$$\lambda_1(A)\leq \lambda_2(A)\leq \cdots\leq\lambda_n(A)$$
and
$$\lambda_1(B)\leq \lambda_2(B)\leq \cdots\leq\lambda_n(B)$$
eigenvalues of $A$ and $B$ respectively.
Then, for any increasing convex function $f$,
\begin{equation}
\sum_{i=1}^nf\left(\lambda_i(A)\lambda_i(B)\right)\geq \sum_{i=1}^nf(A(i,i)B(i,i))
\end{equation}
where $A(i,j)$ and $B(i,j)$ are the $(i,j)$-entries of $A$ and $B$ respectively.
In particular, by letting $B=I_n$, we have
\begin{equation}
\sum_{i=1}^nf\left(\lambda_i(A)\right)\geq \sum_{i=1}^nf(A(i,i)).
\end{equation}
\end{lem}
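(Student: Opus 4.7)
My plan is to diagonalize both $A$ and $B$, apply Jensen's inequality twice to dominate the right-hand side by a bilinear expression in the eigenvalues, and then close the gap with a rearrangement argument based on the supermodularity of $(x,y)\mapsto f(xy)$. First I would write $A=\sum_{k=1}^n\lambda_k(A)v_kv_k^T$ and $B=\sum_{l=1}^n\lambda_l(B)w_lw_l^T$ for orthonormal eigenbases $\{v_k\}$, $\{w_l\}$. Reading off the diagonal entries yields
\begin{equation*}
A(i,i)=\sum_k\lambda_k(A)\,v_k(i)^2,\qquad B(i,i)=\sum_l\lambda_l(B)\,w_l(i)^2,
\end{equation*}
so the product $A(i,i)B(i,i)=\sum_{k,l}v_k(i)^2w_l(i)^2\,\lambda_k(A)\lambda_l(B)$ is a convex combination of the numbers $\lambda_k(A)\lambda_l(B)$ with weights summing to $1$. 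Jensen applied to $f$ then gives $f(A(i,i)B(i,i))\leq\sum_{k,l}v_k(i)^2w_l(i)^2\,f(\lambda_k(A)\lambda_l(B))$. Summing over $i$ and setting $M_{kl}:=\sum_iv_k(i)^2w_l(i)^2$, orthonormality of each eigenbasis makes $M$ doubly stochastic (rows and columns each sum to $1$), so the bound becomes $\sum_i f(A(i,i)B(i,i))\leq\sum_{k,l}M_{kl}\,f(\lambda_k(A)\lambda_l(B))$.

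It therefore remains to prove
\begin{equation*}
\sum_{k,l}M_{kl}\,f(\lambda_k(A)\lambda_l(B))\leq\sum_{k=1}^n f(\lambda_k(A)\lambda_k(B)).
\end{equation*}
I would invoke Birkhoff's theorem to write $M$ as a convex combination of permutation matrices, reducing the task to showing that for every $\sigma\in S_n$,
\begin{equation*}
\sum_kf(\lambda_k(A)\lambda_{\sigma(k)}(B))\leq\sum_kf(\lambda_k(A)\lambda_k(B)).
\end{equation*}
I plan to establish this by a bubble-sort style decomposition of $\sigma$ into adjacent transpositions, which reduces the whole problem to the four-point inequality
\begin{equation*}
f(ab)+f(a'b')\geq f(ab')+f(a'b)\quad\text{for }0<a\leq a',\ 0<b\leq b'.
\end{equation*}
This is precisely the supermodularity of $(x,y)\mapsto f(xy)$ on $(0,\infty)^2$: for $C^2$ functions one computes $\partial_x\partial_y f(xy)=f'(xy)+xy\,f''(xy)\geq 0$ when $f$ is increasing and convex, and the general case follows by approximating $f$ by smooth increasing convex functions.

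The main obstacle is this final supermodularity/rearrangement step, where the ``increasing'' hypothesis on $f$ genuinely enters. The special case $B=I_n$ follows immediately from the Schur–Horn theorem together with the standard majorization inequality for convex $f$, and requires no monotonicity; but to handle general positive definite $B$ one must compare \emph{products} of ordered eigenvalues, and it is the interaction of monotonicity with convexity that ensures the four-point inequality, and hence the rearrangement bound, points the right way.
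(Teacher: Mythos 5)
Your proof is correct, but it takes a genuinely different route from the paper's. The paper works with the Hadamard product $A\circ B$, whose diagonal is exactly $(A(1,1)B(1,1),\dots,A(n,n)B(n,n))$: it cites Schur's theorem ($\mathrm{diag}(A\circ B)\prec\lambda(A\circ B)$), a nontrivial weak-majorization theorem for Hadamard products ($\lambda(A\circ B)\prec_w(\lambda_i(A)\lambda_i(B))_i$, from Horn--Johnson/Zhan), and then the standard fact that weak majorization is preserved, term by sum, under increasing convex functions. You bypass the Hadamard-product eigenvalue theorem entirely and in effect reprove the composite statement from scratch: the spectral decompositions exhibit $A(i,i)B(i,i)$ as a convex combination of the products $\lambda_k(A)\lambda_l(B)$ (the weights $v_k(i)^2w_l(i)^2$ do sum to $1$ over $(k,l)$), Jensen transfers this through $f$, orthonormality of each eigenbasis makes your matrix $M$ doubly stochastic, Birkhoff reduces the problem to permutations, and the supermodularity of $(x,y)\mapsto f(xy)$ on $(0,\infty)^2$ --- where, as you correctly observe, both the monotonicity and the convexity of $f$ are needed, and where positive definiteness guarantees $x,y>0$ --- yields the rearrangement inequality that closes the argument. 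All steps check out; as an alternative to smoothing $f$ for the four-point inequality, note that $ab'+a'b\le ab+a'b'$ and $\max(ab',a'b)\le a'b'$ mean $(ab',a'b)\prec_w(ab,a'b')$, so the inequality is just the two-term case of the same increasing-convex majorization principle. What your approach buys is a self-contained, elementary proof resting only on Jensen, Birkhoff, and rearrangement; what the paper's buys is brevity, at the cost of invoking two packaged results from the majorization literature.
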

\begin{proof}
Let $A\circ B$ be the Hadamard product of $A$ and $B$. That is,
$$A\circ B(i,j)=A(i,j)B(i,j)$$
for $i,j=1,2,\cdots,n$.
Then, by basic majorization relations (See \cite{HJ} or \cite[Theorem 2.6]{Z}),
\begin{equation}\label{eqn-major-1}
\begin{split}
&\{\lambda_1(A\circ B),\lambda_2(A\circ B),\cdots,\lambda_n(A\circ B)\}\\
\prec_w&\{\lambda_1(A)\lambda_1(B),\lambda_2(A)\lambda_2(B),\cdots,\lambda_n(A)\lambda_n(B)\}.
\end{split}
\end{equation}
Moreover, by Schur's Theorem (See  \cite[Theorem 2.1]{Z}),
\begin{equation}\label{eqn-major-2}
\begin{split}
&\{A(1,1)B(1,1),A(2,2)B(2,2),\cdots,A(n,n)B(n,n)\}\\
\prec&\{\lambda_1(A\circ B),\lambda_2(A\circ B),\cdots,\lambda_n(A\circ B)\}.
\end{split}
\end{equation}
Combining \eqref{eqn-major-1} and \eqref{eqn-major-2}, we have
\begin{equation}
\begin{split}
&\{A(1,1)B(1,1),A(2,2)B(2,2),\cdots,A(n,n)B(n,n)\}\\
\prec_w&\{\lambda_1(A)\lambda_1(B),\lambda_2(A)\lambda_2(B),\cdots,\lambda_n(A)\lambda_n(B)\}.
\end{split}
\end{equation}
Then, by majorization principles (See \cite{MO} or \cite[Theorem 2.3]{Z}),
\begin{equation}
\begin{split}
&\{f(A(1,1)B(1,1)),f(A(2,2)B(2,2)),\cdots,f(A(n,n)B(n,n))\}\\
\prec_w&\{f(\lambda_1(A)\lambda_1(B)),f(\lambda_2(A)\lambda_2(B)),\cdots,f(\lambda_n(A)\lambda_n(B))\}
\end{split}
\end{equation}
for any increasing convex function $f$. This give us the conclusion.
\end{proof}
\section{Proof of Theorem \ref{thm-g-steklov}}
In this section, by using Lemma \ref{lem-harm-conj} and the trick in \cite{HPS}, we prove Theorem \ref{thm-g-steklov}.

\begin{proof}[Proof of Theorem \ref{thm-g-steklov}]Let
$$\phi_1\equiv 1, \phi_2,\cdots,\phi_k,\cdots$$
be a complete orthonormal system of eigenfunctions on $\p M$ corresponding to the spectrum of
$$0=\lambda_1\leq \lambda_2\leq\cdots\leq \lambda_k\leq \cdots$$
of $\p M$. Moreover, let
$$\psi_1,\psi_2,\cdots,\psi_k,\cdots$$
be a complete orthonormal system of eigenfunctions for positive Steklov eigenvalues of functions corresponding to eigenvalues listed in ascending order.

Let $u_1\neq 0$ be the harmonic extension of $c_2\phi_2+\cdots+c_{b_1+2m}\phi_{b_1+2m}$ such that $u_1\perp_{L^2(M)} \mathcal H_N^{1}(M)$ and $u_1, u_2\perp_{L^2(\p M)}\psi_1,\psi_2,\cdots,\psi_{m-1}$. Here $u_2$ is the conjugate harmonic function of $u_1$ as in Lemma \ref{lem-harm-conj}. This can always be done since the restrictions are just $b_1+2m-2$ homogeneous linear equations for $b_1+2m-1$ unknowns $c_2,c_3,\cdots,c_{b_1+2m}$.

Similarly, let $u_3\neq 0$ be the harmonic extension of $c_2\phi_2+\cdots+c_{b_1+2m+2}\phi_{b_1+2m+2}$ such that
\begin{enumerate}
\item $*du_3\perp_{L^2(M)}\mathcal H^1_N(M)$;
\item $u_3,u_4\perp_{L^2(\p M)}\psi_1,\psi_2,\cdots,\psi_{m-1}$ and
\item $d u_3\perp_{L^2(M)}d u_1,d u_2.$
\end{enumerate}
Here $u_4$ is the conjugate harmonic function of $u_3$.

Continuing this process on, we can construct nonconstant harmonic functions $u_1,u_2, u_3,u_4,\cdots,u_{2n-1},u_{2n}$, such that
\begin{enumerate}
\item $u_{2i-1}\in \mbox{span}\{\hat\phi_2,\hat \phi_3,\cdots,\hat\phi_{b_1+2m+2i-2}\}$;
\item $u_{2i}$ is the conjugate harmonic function of $u_{2i-1}$;
\item $u_1,u_2,\cdots,u_{2n}\perp_{L^2{(\p M)}} \psi_1,\psi_2,\cdots,\psi_{m-1}$;
\item $du_{2i-1}\perp_{L^2(M)}du_1,du_2,\cdots, du_{2i-2}$
\end{enumerate}
for $i=1,2,\cdots,n$.

Now, we check that
\begin{equation}
d u_{2i}\perp_{L^2(M)} du_1,du_2,\cdots, du_{2i-1}.
\end{equation}
First,
\begin{equation}
\int_M\vv<du_{2i}, du_{2i-1}>dV_M=\int_M\vv<*du_{2i-1},du_{2i-1}>dV_M=-\int_Mdu_{2i-1}\wedge du_{2i-1}dV_M=0.
\end{equation}
Moreover,
\begin{equation}
\begin{split}
\int_{M}\vv<du_{2i},du_{2j}>dV_M=&\int_M\vv<*du_{2i-1},*du_{2j-1}>dV_M\\
=&\int_M\vv<du_{2i-1},du_{2j-1}>dV_M=0.
\end{split}
\end{equation}
and
\begin{equation}
\begin{split}
\int_M\vv<d u_{2i},du_{2j-1}>dV_M=&\int_M\vv<*du_{2i-1},du_{2j-1}>dV_M\\
=&-\int_M\vv<du_{2i-1},*du_{2j-1}>dV_M=-\int_M\vv<du_{2i-1},du_{2j}>dV_M=0
\end{split}
\end{equation}
for any $j=1,2,\cdots,i-1$.

 Hence, we have shown that
 \begin{equation}
 \int_M\vv<du_i,du_j>dV_M=0
 \end{equation}
 for any $i\neq j$.
 Let $V=\mbox{span}\{u_1,u_2,\cdots,u_{2n-1},u_{2n}\}$ and $A$ be a linear transformation on $V$ such that
\begin{equation}\label{eqn-A}
\int_M\vv<du,d v>=\int_{\p M}\vv<Au,v>dV_{\p M}
\end{equation}
for any $u,v\in V$. Let
$$\lambda_1(A)\leq \lambda_2(A)\leq\cdots\leq  \lambda_{2n}(A)$$
be the eigenvalues of $A$. Then, by the Courant-Fischer's min-max principle,
\begin{equation}\label{eqn-s-l}
\sigma_{m+i}^{(0)}\leq \lambda_i(A)
\end{equation}
for $i=1,2,\cdots,2n$.

Let $v_i=\frac{u_i}{\left(\int_M\vv<du_i,du_i>dV_M\right)^{1/2}}$. We still denote the matrix of $A$ under the basis of $\{v_1,v_2,\cdots,v_{2n}\}$ as $A$. Then, by \eqref{eqn-A}
\begin{equation}
A^{-1}(i,j)=\frac{\int_{\p M}\vv<u_i,u_j>dV_{\p M}}{\left(\int_M\vv<du_i,du_i>dV_M\int_M\vv<du_j,du_j>dV_M\right)^{1/2}}.
\end{equation}
Moreover, by Cauchy-Schwarz inequality,
\begin{equation}\label{eqn-AA}
\begin{split}
&A^{-1}(2i-1,2i-1)A^{-1}(2i,2i)\\
=&\frac{\int_{\p M}\vv<u_{2i-1},u_{2i-1}>dV_{\p M}\int_{\p M}\vv<u_{2i},u_{2i}>dV_{\p M}}{\int_M\vv<du_{2i-1},du_{2i-1}>dV_M\int_M\vv<du_{2i},du_{2i}>dV_M}\\
=&\frac{\int_{\p M}\vv<u_{2i-1},u_{2i-1}>dV_{\p M}\int_{\p M}\vv<u_{2i},u_{2i}>dV_{\p M}}{\int_M\vv<*du_{2i},*du_{2i}>dV_M\int_M\vv<du_{2i},du_{2i}>dV_M}\\
=&\frac{\int_{\p M}\vv<u_{2i-1},u_{2i-1}>dV_{\p M}\int_{\p M}\vv<u_{2i},u_{2i}>dV_{\p M}}{(\int_M\vv<du_{2i},du_{2i}>dV_M)^2}\\
=&\frac{\int_{\p M}\vv<u_{2i-1},u_{2i-1}>dV_{\p M}\int_{\p M}\vv<u_{2i},u_{2i}>dV_{\p M}}{(\int_{\p M}\vv<u_{2i},i_\nu du_{2i}>dV_{\p M})^2}\\
=&\frac{\int_{\p M}\vv<u_{2i-1},u_{2i-1}>dV_{\p M}\int_{\p M}\vv<u_{2i},u_{2i}>dV_{\p M}}{(\int_{\p M}\vv<u_{2i},i_\nu *du_{2i-1}>dV_{\p M})^2}\\
\geq&\frac{\int_{\p M}\vv<u_{2i-1},u_{2i-1}>dV_{\p M}}{\int_{\p M}\vv<du_{2i-1},du_{2i-1}>dV_{\p M}}\\
\geq& \frac 1{\lambda_{b_1+2m+2i-2}}.
\end{split}
\end{equation}
for $i=1,2,\cdots,n$.

Finally, by \eqref{eqn-s-l}, \eqref{eqn-AA}, Lemma \ref{lem-matrix}, and that $f$ is increasing and convex,
\begin{equation}
\begin{split}
\sum_{i=1}^{2n}f\left(\frac{1}{\sigma_{m+i}^{(0)}}\right)\geq& \sum_{i=1}^{2n}f\left(\frac{1}{\lambda_i(A)}\right)\\
\geq&\sum_{i=1}^{n}f\left(A^{-1}(2i-1,2i-1)\right)+f\left(A^{-1}(2i,2i)\right)\\
\geq &2\sum_{i=1}^nf\left(\frac{A^{-1}(2i-1,2i-1)+A^{-1}(2i,2i)}{2}\right)\\
\geq &2\sum_{i=1}^nf\left(\left(A^{-1}(2i-1,2i-1)A^{-1}(2i,2i)\right)^{1/2}\right)\\
\geq&2\sum_{i=1}^nf\left(\frac{1}{\lambda_{b_1+2m+2i-2}^{1/2}}\right)
\end{split}
\end{equation}
This completes the proof of the theorem.
\end{proof}
\section{Proof of Theorem \ref{thm-g-steklov-man}}
In this section, by a similar argument as in the proof of Theorem \ref{thm-g-steklov} using conjugate harmonic forms, we prove Theorem \ref{thm-g-steklov-man}.
\begin{proof}[Proof of Theorem \ref{thm-g-steklov-man}]
Let
$$\phi_1\equiv 1, \phi_2,\cdots,\phi_k,\cdots$$
and
$$\psi_1,\psi_2,\cdots,\psi_k,\cdots$$
be the same as in the proof of Theorem \ref{thm-g-steklov}. Moreover, let
$$\epsilon_1,\epsilon_2,\cdots,\epsilon_k,\cdots$$
be a complete orthonormal system of eigenforms for positive Steklov eigenvalues of differential $(n-2)$-forms corresponding to eigenvalues listed in ascending order.

Similarly as in the proof of Theorem \ref{thm-g-steklov}. We can find nonconstant harmonic functions
$u_1,u_2,\cdots,u_m$ such that
\begin{enumerate}
\item $*du_i\perp_{L^2(M)}\mathcal H^{n-1}_N(M)$;
\item $u_i\perp_{L^2(\p M)}\psi_1,\psi_2,\cdots,\psi_{r-1}$;
\item $\omega_i\perp_{L^2(\p M)}\epsilon_1,\epsilon_2,\cdots,\epsilon_{s-1}$ where $\omega_i$ is the conjugate harmonic form of $u_i$ as in Lemma \ref{lem-harm-conj};
\item $u_i\in \mbox{span}\{\hat \phi_2,\hat\phi_3,\cdots, \hat \phi_{b_{n-1}+r+s+i-1}\}$;
\item $\int_M\vv<du_i,du_j>dV_M=\delta_{ij}$
\end{enumerate}
for $i,j=1,2,\cdots,m$. Moreover, note that
\begin{equation}
\int_{M}\vv<d\omega_i,d\omega_j>dV_M=\int_{M}\vv<du_i,du_j>dV_M=\delta_{ij}
\end{equation}
for $i,j=1,2,\cdots,m$.

Let $V=\mbox{span}\{u_1,u_2,\cdots,u_m\}$ and $W=\mbox{span}\{\omega_1,\omega_2,\cdots,\omega_m\}$. Let $A:V\to V$ and $B:W\to W$
be linear transformation on $V$ and $W$ such that
\begin{equation}\label{eqn-def-A}
\int_M\vv<du,dv>dV_M=\int_{\p M}\vv<Au,v>dV_{\p M}
\end{equation}
for any $u,v\in V$ and
\begin{equation}\label{eqn-def-B}
\int_M\vv<d\alpha,d\beta>=\int_{\p M}\vv<B\alpha,\beta>dV_{\p M}
\end{equation}
for any $\alpha,\beta\in W$ respectively. Then, by Courant-Fischer's min-max principle,
\begin{equation}\label{eqn-s-A}
\sigma_{r+i}^{(0)}\leq \lambda_i(A)
\end{equation}
and
\begin{equation}\label{eqn-s-B}
\sigma_{b_{n-2}+s+i-1}^{(n-2)}\leq \lambda_i(B)
\end{equation}
for $i=1,2,\cdots,m$.

Denote the matrix of $A$ and $B$ under the basis $\{u_1,u_2,\cdots,u_m\}$ and $\{\omega_1,\omega_2,\cdots,\omega_m\}$ as $A$ and $B$ respectively. Then, by \eqref{eqn-def-A} and \eqref{eqn-def-B},
\begin{equation}
A^{-1}(i,j)=\int_{\p M}\vv<u_i,u_j>dV_{\p M}
\end{equation}
and
\begin{equation}
B^{-1}(i,j)=\int_{\p M}\vv<\omega_i,\omega_j>dV_{\p M}
\end{equation}
for $i,j=1,2,\cdots,m$. Moreover, by Cauchy-Schwarz inequality,
\begin{equation}\label{eqn-AB}
\begin{split}
&A^{-1}(i,i)B^{-1}(i,i)\\
=&\frac{\int_{\p M}\vv<u_{i},u_{i}>dV_{\p M}\int_{\p M}\vv<\omega_{i},\omega_{i}>dV_{\p M}}{\int_M\vv<du_{i},du_{i}>dV_M\int_M\vv<d\omega_{i},d\omega_{i}>dV_M}\\
=&\frac{\int_{\p M}\vv<u_{i},u_{i}>dV_{\p M}\int_{\p M}\vv<\omega_{i},\omega_{i}>dV_{\p M}}{\int_M\vv<*d\omega_{i},*d\omega_{i}>dV_M\int_M\vv<d\omega_{i},d\omega_{i}>dV_M}\\
=&\frac{\int_{\p M}\vv<u_{i},u_{i}>dV_{\p M}\int_{\p M}\vv<\omega_{i},\omega_{i}>dV_{\p M}}{(\int_M\vv<d\omega_{i},d\omega_{i}>dV_M)^2}\\
=&\frac{\int_{\p M}\vv<u_{i},u_{i}>dV_{\p M}\int_{\p M}\vv<\omega_{i},\omega_{i}>dV_{\p M}}{(\int_{\p M}\vv<\omega_{i},i_\nu d\omega_{i}>dV_{\p M})^2}\\
=&\frac{\int_{\p M}\vv<u_{i},u_{i}>dV_{\p M}\int_{\p M}\vv<\omega_{i},\omega_{i}>dV_{\p M}}{(\int_{\p M}\vv<\omega_{i},i_\nu *du_i>dV_{\p M})^2}\\
\geq&\frac{\int_{\p M}\vv<u_i,u_i>dV_{\p M}}{\int_{\p M}\vv<du_{i},du_{i}>dV_{\p M}}\\
\geq& \frac 1{\lambda_{b_{n-1}+r+s+i-1}}.
\end{split}
\end{equation}
We are ready to prove the inequalities.
\begin{enumerate}
\item By \eqref{eqn-s-A}, \eqref{eqn-s-B}, \eqref{eqn-AB}, Lemma \ref{lem-matrix}, and that $f$ is increasing and convex,
\begin{equation}
\begin{split}
  \sum_{i=1}^{m}f\left(\frac{1}{\sigma_{r+i}^{(0)}}\right)+\sum_{i=1}^mf\left(\frac{1}{\sigma_{b_{n-2}+s+i-1}^{(n-2)}}\right)\geq& \sum_{i=1}^mf\left(\frac{1}{\lambda_i(A)}\right)+\sum_{i=1}^mf\left(\frac{1}{\lambda_i(B)}\right)\\
  \geq&\sum_{i=1}^mf\left(A^{-1}(i,i)\right)+f\left(B^{-1}(i,i)\right)\\
\geq&2\sum_{i=1}^mf\left((A^{-1}(i,i)B^{-1}(i,i))^{1/2}\right)\\
\geq&2\sum_{i=1}^mf\left(\frac{1}{\lambda_{b_{n-1}+r+s+i-1}^{1/2}}\right).
\end{split}
\end{equation}

(2) By \eqref{eqn-s-A},\eqref{eqn-s-B} , \eqref{eqn-AB} and Lemma \ref{lem-matrix}, and that $f$ is increasing and convex,
\begin{equation}
\begin{split}
\sum_{i=1}^{m}f\left(\frac{1}{\sigma_{r+i}^{(0)}\sigma_{b_{n-2}+s+i-1}^{(n-2)}}\right)\geq&\sum_{i=1}^{m}f\left(\frac{1}{\lambda_i(A)\lambda_i(B)}\right)\\
\geq&\sum_{i=1}^{m}f\left(A^{-1}(i,i)B^{-1}(i,i)\right)\\
\geq&\sum_{i=1}^mf\left(\frac{1}{\lambda_{b_{n-1}+r+s+i-1}}\right).
\end{split}
\end{equation}
\end{enumerate}
\end{proof}
\section{Proof of Theorem \ref{thm-trace-g}}
In this section, we give the proof of Theorem \ref{thm-trace-g} and some simple corollaries of it.
\begin{proof}[Proof of Theorem \ref{thm-trace-g}]
Let $A^{(p+1)}$ be a linear transformation on $\wedge^{p+1}V$ such that
\begin{equation}\label{eqn-A-p}
\int_{M}\vv<A^{(p+1)}\xi,\eta>dV_M=\int_{\p M}\vv<i_\nu \xi,i_\nu\eta>dV_{\p M}
\end{equation}
for any $\xi,\eta\in \wedge^{p+1}V$. Then, by Lemma 2.1 in \cite{YY},
\begin{equation}
\sigma_{b_p+i}^{(p)}\leq \lambda_{i}(A^{(p+1)})
\end{equation}
for $i=1,2,\cdots,C_m^{p+1}$. So,
\begin{equation}
\sum_{i=1}^{C_m^{p+1}}\sigma_{b_p+i}^{(p)}\leq \mbox{tr}(A^{(p+1)}).
\end{equation}
Let $\xi_1,\xi_2,\cdots,\xi_{C_m^{p+1}}$ be an orthonormal basis of $\wedge^{(p+1)}V$. That is
\begin{equation}
\vv<\xi_i,\xi_j>=\delta_{ij}
\end{equation}
for $i,j=1,2,\cdots,C_m^{p+1}$. By \eqref{eqn-A-p}, we know that
\begin{equation}\label{eqn-tr-A}
\begin{split}
\mbox{tr}A^{(p+1)}=\frac{1}{\Vol(M)}\sum_{i=1}^{C_{m}^{p+1}}\int_{\p M}\|i_\nu \xi_i\|^2dV_{\p M}=\frac{1}{\Vol(M)}\int_{\p M}\sum_{i=1}^{C_{m}^{p+1}}\|i_{\nu^{\top}} \xi_i\|^2dV_{\p M}\\
\end{split}
\end{equation}
where $\nu^\top$ is the orthogonal projection of $\nu$ onto the dual of $V$.

Let $e_1,e_2,\cdots,e_m$ be an orthogonal basis of the dual of $V$ and $\omega_1,\omega_2,\cdots,\omega_m$ be their dual. Without loss of generality, we can assume that $v^{\top}=\nu_1e_1$ with $0<\nu_1\leq 1$ and $\{\xi_1,\xi_2,\cdots,\xi_m\}$ are just $\{\omega_{i_1}\wedge\omega_{i_2}\wedge\cdots\wedge\omega_{i_{p+1}}\ |\ 1\leq i_1<i_2<\cdots<i_{p+1}\leq m\}$. Then
\begin{equation}\label{eqn-norm}
\sum_{i=1}^{C_{m}^{p+1}}\|i_{\nu^{\top}} \xi_i\|^2=\nu_1^2\sum_{2\leq i_2<i_3<\cdots<i_{p+1}\leq m}\|\omega_{i_2}\wedge\cdots\wedge\omega_{i_{p+1}}\|^2\leq C_{m-1}^{p}.
\end{equation}
Combining \eqref{eqn-tr-A} and \eqref{eqn-norm}, we get the conclusion.
\end{proof}

As a corollary, we get the following estimate in \cite{YY}.
\begin{cor}\label{cor-YY-1}
Let $(M^n,g)$ be a compact oriented Riemannian manifold with nonempty boundary. Let $V$ be the space of parallel exact 1-forms on $M$. Suppose that $\dim V=m>0$. Then
\begin{equation}
\sigma_{b_{p}+i}^{(p)}\leq \frac{C_{m-1}^p}{C_m^{p+1}+1-i}\frac{\Vol(\p M)}{\Vol (M)}
\end{equation}
for $p=1,2,\cdots,m-1$ and $i=1,2,\cdots,C_m^{p+1}$
\end{cor}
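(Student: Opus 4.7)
The corollary is a direct consequence of the trace inequality from Theorem \ref{thm-trace-g}, so my plan is simply to extract individual eigenvalue bounds from the bound on their sum by exploiting the monotonicity of the Steklov spectrum.

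Concretely, I would fix $p\in\{1,2,\ldots,m-1\}$ and $i\in\{1,2,\ldots,C_m^{p+1}\}$, and truncate the sum on the left-hand side of Theorem \ref{thm-trace-g} by discarding the first $i-1$ terms. Since the $\sigma_{b_p+j}^{(p)}$ are listed in ascending order, each of the remaining $C_m^{p+1}-i+1$ terms is at least $\sigma_{b_p+i}^{(p)}$, giving
\begin{equation}
(C_m^{p+1}+1-i)\,\sigma_{b_p+i}^{(p)}\le \sum_{j=i}^{C_m^{p+1}}\sigma_{b_p+j}^{(p)}\le \sum_{j=1}^{C_m^{p+1}}\sigma_{b_p+j}^{(p)}\le \frac{C_{m-1}^{p}\Vol(\p M)}{\Vol(M)},
\end{equation}
where the last step is exactly the conclusion of Theorem \ref{thm-trace-g}. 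Dividing through by the positive integer $C_m^{p+1}+1-i$ yields the stated estimate.

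There is no real obstacle here: the only structural input is the trace bound from Theorem \ref{thm-trace-g} and the elementary fact that the arithmetic mean of the largest $k$ terms of an increasing sequence dominates any individual term among them. I would present the argument in two lines immediately after restating Theorem \ref{thm-trace-g}, and I would not expect any sharpness issues, since the corollary is strictly weaker than the trace form.
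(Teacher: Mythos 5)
Your proposal is correct and coincides with the paper's own proof: both truncate the trace sum of Theorem \ref{thm-trace-g} to the terms with index at least $i$, bound each of the remaining $C_m^{p+1}+1-i$ terms below by $\sigma_{b_p+i}^{(p)}$ using the ascending ordering, and divide. The only cosmetic difference is that you write out the intermediate comparison of the tail sum with the full sum (valid since the eigenvalues are nonnegative), which the paper leaves implicit.
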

\begin{proof} By Theorem \ref{thm-trace-g},
\begin{equation}
(C_m^{p+1}+1-i)\sigma_{b_p+i}^{(p)}\leq \sum_{k=i}^{C_{m}^{p+1}}\sigma_{b_p+k}^{(p)}\leq \frac{C_{m-1}^p\Vol(\p M)}{\Vol (M)}.
\end{equation}
This gives us the conclusion.
\end{proof}
Applying Theorem \ref{thm-trace-g} to the Euclidean case, we have the following inequality.
\begin{cor}\label{cor-YY-2}
Let $\Omega$ be a bounded domain with smooth boundary in $\R^n$. Then
\begin{equation}\label{eqn-tr-euclidean}
\sum_{i=1}^{C_n^{p+1}}\sigma_{b_{p}+i}^{(p)}\leq \frac{C_{n-1}^{p}\Vol(\p\Omega)}{\Vol(\Omega)}.
\end{equation}
for $p=0,1,2,\cdots,n-1$.
\end{cor}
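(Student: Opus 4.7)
The whole claim follows as a direct specialization of Theorem \ref{thm-trace-g}; the only substantive input is to identify the correct value of $m$ for Euclidean domains. On $\R^n$ with the flat metric, a $1$-form $\omega=\sum_i a_i\,dx_i$ is parallel if and only if every coefficient $a_i$ is constant, and every such form is automatically exact since $\omega=d\bigl(\sum_i a_i x_i\bigr)$. Hence for any bounded domain $\Omega\subset\R^n$, the space $V$ of parallel exact $1$-forms on $\Omega$ is spanned by $dx_1,\ldots,dx_n$, so $m=n$. Plugging $m=n$ into Theorem \ref{thm-trace-g} immediately yields \eqref{eqn-tr-euclidean} for $p=1,2,\ldots,n-1$.

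The boundary case $p=0$ (which is the classical Brock inequality) is not literally within the range of Theorem \ref{thm-trace-g} as stated, but it falls out of the same argument used to prove that theorem. I would define $A^{(1)}:V\to V$ by
$$\int_\Omega\langle A^{(1)}\xi,\eta\rangle\,dV_\Omega=\int_{\p\Omega}\langle i_\nu\xi,i_\nu\eta\rangle\,dV_{\p\Omega},$$
invoke the variational min-max characterization to obtain $\sigma^{(0)}_{b_0+i}\leq\lambda_i(A^{(1)})$ for $i=1,\ldots,n$, and compute the trace in the $L^2(\Omega)$-orthonormal basis $\xi_i=dx_i/\sqrt{\Vol(\Omega)}$. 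The pointwise identity $\sum_{i=1}^n\nu_i^2=|\nu|^2=1$ on $\p\Omega$ collapses $\mathrm{tr}(A^{(1)})$ to $\Vol(\p\Omega)/\Vol(\Omega)$, producing the $p=0$ case.

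I do not anticipate any real obstacle: the entire corollary reduces to the identification $V\cong\R^n$ via the coordinate coframe together with the trivial unit-norm identity for the outward normal, and no ingredient beyond Theorem \ref{thm-trace-g} is needed.
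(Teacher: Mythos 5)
Your proposal is correct and matches the paper's (implicit) argument: the paper simply specializes Theorem \ref{thm-trace-g} by noting that for a Euclidean domain the parallel exact $1$-forms are spanned by $dx_1,\dots,dx_n$, so $m=n$. Your extra care with the $p=0$ case is well placed, since Theorem \ref{thm-trace-g} is stated only for $p\geq 1$ while the corollary claims $p\geq 0$; the paper glosses over this, and your rerun of the trace computation for $A^{(1)}$ with the basis $dx_i/\sqrt{\Vol(\Omega)}$ and the identity $\sum_i\nu_i^2=1$ is exactly how the theorem's proof extends to close that gap.
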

\begin{rem}\label{rem-br}
By Cauchy-Schwarz inequality and \eqref{eqn-tr-euclidean}, it is not hard to see that
\begin{equation}
\sum_{i=1}^{C_{n}^{p+1}}\frac{1}{\sigma_{b_{p}+i}^{(p)}}\geq \frac{nC_{n}^{p+1}\Vol(\Omega)}{(p+1)\Vol(\p\Omega)}.
\end{equation}
This is a generalization of Theorem 1 in \cite{Br}.
\end{rem}

\end{document}